\newtheorem*{thm*}{Theorem}
\newcommand{\ff}{{\mathcal F}}
\newtheorem*{cla*}{Claim}
\newcommand{\G}{{\mathcal G}}
\newtheorem{thm}{Theorem}
\newtheorem{prb}{Problem}
\date{}
\title{Choice number of Kneser graphs}
\author{Vera Bulankina\footnote{Moscow Institute of Physics and Technology, Russia} \, and Andrey Kupavskii\footnote{G-SCOP, CNRS, University Grenoble-Alpes, France and
Moscow Institute of Physics and Technology, Russia; Email: {\tt kupavskii@ya.ru} }}
\date{}
\begin{document}
\maketitle
\begin{abstract}
In this short note, we show that for any $\epsilon >0$ and $k<n^{0.5-\epsilon}$ the choice number of the Kneser Graph $KG_{n,k}$ is $\Theta (n\log n)$.\end{abstract}

\section{Introduction}
Let $[n] = \{1,\ldots, n\}$ be the standard $n$-element set and, for a set $X$, let ${X\choose k}$ stand for all $k$-element subsets of $X$ ($k$-sets for short). For integers $n\ge 2k>0$, the Kneser graph $KG_{n,k}$ is a graph with the vertex set ${[n]\choose k}$ and  the edge set that consists of all pairs of disjoint $k$-sets.  

Recall that, for a graph $G$, the quantity $\chi(G)$ is the smallest number $s$ of colors such that there is a vertex coloring in $s$ colors in which the endpoints of each edge receive different colors (a {\it proper coloring}). The choice number $ch(G)$ is the smallest $s$ such that for any assignment of lists $S(v)$ of size $s$ to each vertex $v\in G$  there is a proper coloring of the vertices of $G$ that uses the color from $S(v)$ for each $v$.

It is by now one of the classical results in combinatorics that $\chi(KG_{n,k}) = n-2k+2$. It was shown by Lov\'asz \cite{L}, answering the question by Kneser. In fact, the upper bound is easy: for each $1\le i\le n-2k+1,$ color in $i$ the sets with minimum element $i$. The remaining $k$-sets are subsets of $\{n-2k+2,\ldots, n\}$ and do not induce an edge in $KG_{n,k}.$ Thus, they can be colored in one color.

Lov\'asz' paper initiated the use of topological method in combinatorics. By now, different proofs \cite{Bar, Mat} of Lov\'asz's result are known; however, all of them rely on topological arguments. Rather quickly after B\'ar\'any's proof, Schrijver \cite{Sch} constructed vertex-critical subgraphs of Kneser graphs, that is, subgraphs with the same chromatic number such that the deletion of any vertex decreases the chromatic number. These subgraphs are induced subgraphs of $KG_{n,k}$ on the vertices that correspond to $k$-sets that do not contain two cyclically consecutive elements. Very recently, Kaiser and Stehl\'ik \cite{KS} constructed edge-critical subgraphs of Schrijver graphs. There, deletion of any edge decreases chromatic number. After a series of papers \cite{Kup1, AH, Kup2}, the second author and Kiselev \cite{KK} essentially determined the chromatic number of a random subgraph of $KG_{n,k},$ obtained by including each edge with probability $1/2.$ There are extensions of Lov\'asz' and Schrijver's results to hypergraphs \cite{AFL}.

In this note, we study the choice number of Kneser graphs. In what follows, $\log x$ stands for the natural logarithm of $x.$

\begin{thm}\label{thmub}
For any $n\ge 2k>0$ we have $ch(KG_{n,k}) \leq n \log\frac{n}{k} + n.$
\end{thm}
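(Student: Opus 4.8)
The plan is to give a short first-moment argument, using that $KG_{n,k}$ has very large independent sets (every \emph{star}, i.e.\ the family of all $k$-sets containing a fixed point, is independent) together with the fact that here we are allowed lists that are far longer than $\chi(KG_{n,k})=n-2k+2$, so even a crude probabilistic construction has room to succeed.

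Fix an integer $s$ and a list assignment $(S(v))_v$ with $|S(v)|=s$; I will show that $s\ge n\ln\frac nk+n$ suffices for a proper list coloring to exist. Let $\mathcal C=\bigcup_v S(v)$ be the finite set of colors that occur. The coloring scheme will be: to every color $c\in\mathcal C$ assign a point $x_c\in[n]$, and set
\[
I_c=\Big\{A\in\binom{[n]}{k}:\ c\in S(A)\ \text{and}\ x_c\in A\Big\}.
\]
Each $I_c$ is contained in the star centered at $x_c$, hence is an independent set of $KG_{n,k}$. If the $I_c$ cover $\binom{[n]}{k}$, then choosing for each $k$-set $A$ some color $\phi(A)=c$ with $A\in I_c$ yields a proper list coloring: indeed $\phi(A)\in S(A)$ by definition, and if $AA'$ is an edge with $\phi(A)=\phi(A')=c$ then $A,A'\in I_c$, contradicting that $I_c$ is independent. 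So everything reduces to choosing the centers $x_c$ so that every $k$-set $A$ satisfies $A\cap\{x_c:c\in S(A)\}\neq\emptyset$.

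To do this I would take the $x_c$, $c\in\mathcal C$, independent and uniform in $[n]$. For a fixed $A$, each of the $s$ centers $x_c$ with $c\in S(A)$ lies outside $A$ with probability $1-k/n$, so $\Pr[A\text{ uncovered}]=(1-k/n)^s\le e^{-sk/n}$, and by the union bound the expected number of uncovered $k$-sets is at most $\binom nk(1-k/n)^s\le\binom nk e^{-sk/n}$. Using $\binom nk\le(en/k)^k$ and $s\ge n\ln\frac nk+n=n\ln\frac{en}{k}$, this is at most $(en/k)^k e^{-k\ln(en/k)}=1$; since $\binom nk<(en/k)^k$ strictly (and $(1-k/n)^s<e^{-sk/n}$), the expectation is in fact $<1$, so some choice of centers leaves no $k$-set uncovered. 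That choice, fed into the scheme above, produces the required coloring, giving $ch(KG_{n,k})$ at most the least integer $\ge n\ln\frac nk+n$; the genuine slack in the displayed estimate (which is well below $1$, in fact by a factor growing with $k$ or with $n/k$) absorbs the rounding and lets one take $s=\lfloor n\ln\frac nk+n\rfloor$, yielding the stated bound.

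I do not expect a real obstacle here; the only points to handle carefully are (i) verifying that the classes $I_c$ are independent and that $\phi$ is well defined and proper, and (ii) the elementary estimates $\binom nk\le(en/k)^k$ and $(1-x)^s\le e^{-xs}$, together with the small extra slack needed to land exactly on $n\ln\frac nk+n$ rather than on its ceiling. It is perhaps worth noting that, unlike for $\chi(KG_{n,k})$, no topology enters: the choice number is of order $n\log n$, which is wasteful enough to accommodate this first-moment bound.
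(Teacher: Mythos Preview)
Your argument is correct and is essentially identical to the paper's proof: the paper also assigns to each color $c$ a uniformly random element $f(c)\in[n]$, colors a vertex $v$ with any $c\in S(v)$ satisfying $f(c)\in v$, observes that two vertices receiving the same color must share the element $f(c)$, and then applies the same union bound $\binom{n}{k}(1-k/n)^m<(en/k)^k e^{-mk/n}<1$ for $m>n\ln(n/k)+n$. Your framing via the star independent sets $I_c$ is just a rephrasing of the same construction, and your remarks about rounding address a point the paper simply glosses over.
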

It should be clear that $ch(KG_{n,k})\ge \chi(KG_{n,k})=n-2k+2$, and thus Theorem~\ref{thmub} implies that $ch(KG_{n,k}) = \Theta (n)$ for $C_1n\le k \le C_2n$, where $0<C_1\le C_2<\frac 12$.
The following result improves on this lower bound  for relatively small $k$.

\begin{thm}\label{thm4}
Fix $s\ge 3.$ If $n$ is sufficiently large and $3\le k\le n^{\frac 12-\frac 1s}$ then $ch(KG_{n,k})\ge \frac 1{2s^2} n\log n$. For $k=2$ we have  $ch(KG_{n,k})\ge \frac 1{32} n\log n$ for sufficiently large $n$.
\end{thm}

For $k=2$ we can improve the bound to $ch(KG_{n,k})\ge \frac 1{4} n\log n$, but it requires a different proof which we decided to omit.
These two results leave open the following intriguing question.
\begin{prb}
Determine the asymptotics of $ch(KG_{n,k})$ for $ \Omega(\sqrt n)= k = o(n)$.
\end{prb}

\section{Proofs}
\begin{proof}[Proof of Theorem~\ref{thmub}]
We shall employ the probabilistic method. Let $S(v)$ be the list of $m$ colors assigned to vertex $v$. Denote by $L$ the set of all colors assigned to at least $1$ vertex. In what follows, we slightly abuse notation and identify vertices of $KG_{n,k}$ and the corresponding $k$-sets.

Let us take a random map $f: L\to [n]$. Such a correspondence induces a coloring of $KG_{n,k}$ as follows. 
We color a $k$-set $v$ in color $\gamma$, $\gamma\in S(v)$, if there is an element $i\in [n]$ such that, first, $i\in v$ and, second, $f(\gamma) = i.$ (If there are several such $\gamma$, then we use any of them.) It should be clear that such coloring, if it exists, is proper and respects the color lists. Indeed, if two sets share the same color $\gamma$, then they must share a common element $f(\gamma).$

The last part of the proof is to show that the probability that such a coloring exists is non-zero. 
The probability that a vertex $v$ is not colored is at most  $(1-\frac{k}{n})^{m}<e^{-m\frac{k}{n}}.$ Then, the probability that there is at least one vertex that is not colored is at most  ${ n \choose k} \cdot e^{-m\frac{k}{n}}$. If this probability is strictly smaller than $1$, then with positive probability the opposite holds, and we have a proper coloring. Let us bound this last expression.
$${n \choose k} \cdot e^{-m\frac{k}{n}} < \left(\frac{ne}{k}\right)^k \cdot e^{-m\frac{k}{n}} = e^{k(\log(\frac{ne}{k})-\frac{m}{n})} < 1, $$ if $m > n \log\frac{n}{k} + n.$
\end{proof}

\subsection{Proof of Theorem~\ref{thm4}}
We shall need the following structural result concerning intersecting families. Recall that a family $\ff\subset 2^{[n]}$ is {\it intersecting } if $F_1\cap F_2\ne \emptyset $ for any $F_1,F_2\in \ff.$ 
For a family $\ff$ and a set $S$, let us use the following notation: $$\ff(S):=\{F\in \ff: S\subset F\}.$$
\begin{thm}\label{thmintstruc} Consider an intersecting family $\ff \subset {[n]\choose k}$ and fix an integer $s\ge 2.$ Then either there exists a family $\G\subset {[n]\choose s}$, $|\G|\le k^s,$ such that $$\ff\subset \bigcup_{S\in \G}\ff(S),$$ 
or a set $I\subset [n]$ of size at most $s-1$ such that $$\ff\subset \bigcup_{i\in I}\ff(\{i\}).$$
\end{thm}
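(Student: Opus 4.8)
The plan is to establish the dichotomy directly, with no induction on $s$. If there is a set $I\subset [n]$ with $|I|\le s-1$ that meets every member of $\ff$, then $\ff\subset\bigcup_{i\in I}\ff(i)$ and we are in the second case, so assume from now on that no such $I$ exists (in particular, $\ff\ne\emptyset$). I would then produce the required $\G$ by a level-by-level refinement.

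Concretely, I would construct a nested sequence of collections $\mathcal T_0,\mathcal T_1,\dots,\mathcal T_s$, where $\mathcal T_t$ consists of $t$-element subsets of $[n]$, $|\mathcal T_t|\le k^t$, and $\ff\subset\bigcup_{T\in\mathcal T_t}\ff(T)$. Start with $\mathcal T_0=\{\emptyset\}$. To pass from $\mathcal T_t$ to $\mathcal T_{t+1}$ for $t=0,1,\dots,s-1$: for each $T\in\mathcal T_t$ with $\ff(T)\ne\emptyset$, note that $|T|=t\le s-1$, so by our standing assumption there is a member $G_T\in\ff$ with $G_T\cap T=\emptyset$. Since $\ff$ is intersecting, every $F\in\ff(T)$ meets $G_T$, and as $T\cap G_T=\emptyset$ this forces $F$ to contain some $j\in G_T$ with $j\notin T$; hence $F\supseteq T\cup\{j\}$, a set of size $t+1$. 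Therefore $\ff(T)\subset\bigcup_{j\in G_T}\ff(T\cup\{j\})$, and collecting all these $(t+1)$-sets over all relevant $T$ yields $\mathcal T_{t+1}$ with $|\mathcal T_{t+1}|\le k\,|\mathcal T_t|\le k^{t+1}$ and $\ff\subset\bigcup_{T'\in\mathcal T_{t+1}}\ff(T')$. After $s$ rounds, $\G:=\mathcal T_s\subset\binom{[n]}{s}$ satisfies $|\G|\le k^s$ and $\ff\subset\bigcup_{S\in\G}\ff(S)$, which is the first alternative. I would also remark that the mere existence of $G_T$ disjoint from a set of size $\le s-1$, combined with the intersecting property applied to a fixed member of $\ff$, forces $s\le k$ in this case, so the $(t+1)$-sets produced are genuinely contained in members of $\ff$; when $s>k$ the second alternative holds trivially, taking $I$ to be any member of $\ff$.

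The remaining points are purely bookkeeping: sets $T\in\mathcal T_t$ with $\ff(T)=\emptyset$ are simply discarded, which preserves the covering property since such $\ff(T)$ contribute nothing, and the case $\ff=\emptyset$ makes every assertion vacuous. I do not anticipate a real obstacle: the entire argument hinges on the single observation that the "forbidden" set $T$ at each stage has size at most $s-1$, which is exactly the threshold at which the failure of the second alternative guarantees a member $G_T$ avoiding $T$ — and this is precisely what lets the refinement run for $s$ rounds while keeping $|\G|\le k^s$.
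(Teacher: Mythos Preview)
Your argument is correct and is essentially the paper's proof: both split on whether $\ff$ has a cover of size at most $s-1$, and in the absence of such a cover both build $\G$ by the same level-by-level refinement, at each stage replacing a set $T$ by the sets $T\cup\{j\}$ with $j$ ranging over a member of $\ff$ disjoint from $T$. The only cosmetic difference is that you start the recursion at $\mathcal T_0=\{\emptyset\}$ whereas the paper starts at $\G_1$ (the singletons of a fixed $F\in\ff$), which is exactly your first step; your added remarks on discarding empty $\ff(T)$ and on the case $s>k$ are harmless bookkeeping.
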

\begin{proof}
 Recall that a {\it cover} of the family $\ff$ is a set $C$  with $F\cap C\ne \emptyset$ for any $F\in \ff.$ The {\it covering number} $\tau(\ff)$  is the minimum size of a cover of $\ff$.

If $\tau(\ff)\le s-1$ then simply take $I$ to be the smallest cover of $\ff.$

If $\tau(\ff)\ge s$ then we shall construct $\G$ using the following simple inductive argument for an intersecting $\ff,$ which is inspired by the paper of Erd\H os and Lov\' asz \cite{EL} (cf. also \cite{KK2}). 

Take an arbitrary set $F\in \ff.$  Define $\G_1\subset {[n]\choose 1}$ as follows: $\G_1:=\{\{i\}: i\in F\}.$ Then $\ff\subset \cup_{i\in F}\ff(i)$ since $\ff$ is intersecting. %Next, for each $i\in F,$ there is a set $F_i\in \ff$ that does not contain $i$: otherwise, $\{i\}$ is a cover for $\ff$ and $\tau(\ff) =1.$

%Any set from $\ff(i)$ must intersect $F_i.$ Thus, $\ff = \cup_{i\in F}\cup_{j\in F_i}\ff(\{i,j\})$. Define $\G_2\subset {[n]\choose 2}$ as follows: $\G_2:=\{\{i,j\}: i\in F, j\in F_i\}.$ 

For each  $1\le \ell<s$, let us show how to construct $\G_{\ell+1}$ from $\G_\ell$. Assume that we have a family $\G_\ell\subset {[n]\choose \ell}$ of at most $k^\ell$ sets such that $\ff\subset \cup_{G\in \G_{\ell}}\ff(G)$. For each set $G\in \G_\ell$, consider a set $F_G\in\ff$ that is disjoint with $G$. Such a set must exist since $|G|<\tau(\ff)$. Put $\G_{\ell+1}:=\{G\cup \{i\}: G\in \G_{\ell}, i\in F_G\}$. It should be clear that $|\G_{\ell+1}|\le k^{\ell+1}$ and that $$\ff\subset \bigcup_{G'\in \G_{\ell+1}}\ff(G').$$ 
Finally, we put $\G:=\G_s$.
\end{proof}

We shall also need the Tur\'an-type result for hypergraphs due to Katona, Nemeth and Simonovits \cite{KNS}. Recall that, for a hypergraph $\mathcal H$, its {\it independence number} $\alpha(\mathcal H)$ is the size of the largest subset of vertices that does not contain any edge of $\mathcal H$.
\begin{thm}[\cite{KNS}]\label{thmtur}
If $\mathcal H\subset {X\choose s}$ is a hypergraph with $\alpha(\mathcal H) = q$ then $$|\mathcal H|\ge {|X|\choose s}/{q\choose s}\ge \Big(\frac {|X|}{q}\Big)^s.$$
\end{thm}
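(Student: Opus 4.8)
The plan is to prove this by a double-counting (incidence) argument between the edges of $\mathcal H$ and the large subsets of $X$ that contain them; this is the classical route to Tur\'an-type bounds for hypergraphs. Write $n=|X|$. The single structural input I extract from the independence hypothesis is that \emph{every sufficiently large subset of $X$ must contain an edge}: since $\alpha(H)=q$, no large set can be independent, so (counting against subsets of the appropriate size) each such subset $A\subseteq X$ contains at least one edge $E\in\mathcal H$ with $E\subseteq A$. This is the only place where the quantity $q$ enters the argument, and fixing the threshold exactly right — so that the denominator emerges as $\binom{q}{s}$ rather than $\binom{q+1}{s}$ — is the one delicate point.

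With this in hand I would set up the incidence count on the pairs $(E,A)$ with $E\in\mathcal H$, $A\in\binom{X}{q}$, and $E\subseteq A$. Counting by $A$ first, each of the $\binom{n}{q}$ sets $A$ contributes at least one pair, so the number of incidences is at least $\binom{n}{q}$. Counting by $E$ first, every edge has size $s$ and is contained in exactly $\binom{n-s}{q-s}$ sets of size $q$, so the number of incidences equals $|\mathcal H|\binom{n-s}{q-s}$. Comparing the two counts yields $|\mathcal H|\binom{n-s}{q-s}\ge\binom{n}{q}$, that is, $|\mathcal H|\ge \binom{n}{q}/\binom{n-s}{q-s}$.

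It then remains to simplify this ratio into the two displayed inequalities. For the first, I would invoke the elementary identity $\binom{n}{q}\binom{q}{s}=\binom{n}{s}\binom{n-s}{q-s}$ (both sides count a nested pair consisting of an $s$-set inside a $q$-set inside $X$), which rearranges precisely to $\binom{n}{q}/\binom{n-s}{q-s}=\binom{n}{s}/\binom{q}{s}$. For the second, I would write the ratio as a telescoping product $\binom{n}{s}/\binom{q}{s}=\prod_{i=0}^{s-1}\frac{n-i}{q-i}$ and use that $\frac{n-i}{q-i}\ge\frac{n}{q}$ for each $0\le i\le s-1$ (valid since $n\ge q$), which gives the clean bound $(n/q)^s$.

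I expect the main obstacle to be purely bookkeeping: ensuring the incidence count is run against subsets of exactly the right size, so that the independence hypothesis genuinely forces an edge inside every counted subset while the denominator still matches the stated $\binom{q}{s}$. Once the correct set size is pinned down, both incidence bounds and the final binomial manipulation are routine.
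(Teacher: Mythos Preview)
The paper does not prove this theorem; it is quoted from \cite{KNS} and used as a black box. Your double-counting approach is exactly the classical argument behind the Katona--Nemetz--Simonovits bound, and the binomial identity and telescoping product at the end are correct.

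However, the step you yourself flagged as ``delicate'' is in fact a genuine gap, not mere bookkeeping. From $\alpha(\mathcal H)=q$ you only know that every set of size $q+1$ contains an edge; there \emph{is} an independent set of size $q$, so your claim that ``each of the $\binom{n}{q}$ sets $A$ contributes at least one pair'' is false. Running the incidence count over $(q+1)$-sets instead gives $|\mathcal H|\ge \binom{n}{q+1}/\binom{n-s}{q+1-s}=\binom{n}{s}/\binom{q+1}{s}$, with $q+1$ rather than $q$ in the denominator. There is no trick that recovers $\binom{q}{s}$ here: as stated, the first inequality is off by one (take $\mathcal H=\emptyset$, so $q=|X|$, and the claimed bound reads $0\ge 1$). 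The paper's formulation is thus slightly loose; what the averaging argument actually yields is $|\mathcal H|\ge \binom{|X|}{s}/\binom{q+1}{s}\ge \big(|X|/(q+1)\big)^s$. This discrepancy is immaterial for the paper's application, where only the order of magnitude of $\alpha(\mathcal H)\ge |W|/|\mathcal H|^{1/s}$ is used, but you should not expect to close the gap to $\binom{q}{s}$ by choosing the threshold more cleverly.
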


\begin{proof}[Proof of Theorem~\ref{thm4}] We again employ the probabilistic method. For shorthand, put $u: = \frac 1{s^2}n\log n$.\footnote{We tacitly assume that $u$ is an integer.} Take a set of $u$ colors and correspond to each vertex of $KG_{n,k}$ a random subset of colors of size $u/2.$ 

Take an arbitrary independent set in $KG_{n,k}$ (i.e., an intersecting family in ${[n]\choose k}$) and fix an integer $s\ge 2.$ Using Theorem~\ref{thmintstruc}, we get that each such independent set is contained in one of the families from $\mathcal C$, where $\mathcal C$ consists of all families $\mathcal K$ of the following two forms:\footnote{Note that $\mathcal C$ is a family of families.} 
\begin{itemize}
    \item[type A:] all $k$-sets that intersect a fixed set $I(\mathcal K),$   $|I| =s-1$;
    \item[type B:] all $k$-sets that contain one of the $s$-sets from a family $\G(\mathcal K)\subset {[n]\choose s},$ $|\G(\mathcal K)|=k^s.$
\end{itemize}
Note that $|\mathcal C|\le n^{s-1}+{n\choose s}^{k^s}$.  We say that a coloring $X =X_1\sqcup\ldots \sqcup X_m$ of a set $X$ {\it lies} in a cover $X =X_1'\cup\ldots \cup X_m'$ of the same set if $X_i\subset X_i'$ for each $i.$ Using this terminology, any possible partition of ${[n]\choose k}$ into $u$ independent sets lies in one of the 
\begin{equation}\label{eqnumbercolor}|\mathcal C|^u \le \Big({n\choose s}^{k^s}+n^{s-1}\Big)^{u}\le n^{sk^su}\end{equation}
covers, formed by a $u$-tuple of families from $\mathcal C$. 

 For a given cover $\mathcal K = \mathcal K_1\cup\ldots\cup \mathcal K_u$ from $\mathcal C,$ let us bound from above the probability of the event $A_{\mathcal K}$ that $KG_{n,k}$ can be colored in one of the colorings of ${[n]\choose k}$ that lie in $\mathcal K$ and that respects the lists assigned to the $k$-sets. For each $\ell\in [n]$, define $d_\ell = |\{j\in [u]: \mathcal K_j\text{ is of type A and }\ell\in I(\mathcal K_j)\}$. Then, clearly, $\sum_{i\in[n]} d_i \le (s-1)u$, and, thus, we get that for any $0<\epsilon<1$ there is a set $W\subset [n]$ of $\epsilon n$ elements such that \begin{equation}\label{eqlowdeg}d_i\le (1-\epsilon)^{-1}(s-1)u/n\text{ for any }i\in W.\end{equation}
 
 (Otherwise, $\sum_{i\in [n]\setminus W}d_i>(n-|W|)(1-\epsilon)^{-1}(s-1)u/n>(s-1)u.$)
 Next, define an $s$-uniform hypergraph 
 $$\mathcal H:=\big\{H\subset W: \mathcal K_j \text{ is of type B and }H\in \G(\mathcal K_j)\text{ for some }j\big\}.$$
 Note that $|\mathcal H|\le k^s u.$ Applying Theorem~\ref{thmtur}, we get that 
 $$\alpha(\mathcal H)\ge \frac{|W|}{|\mathcal H|^{1/s}}\ge \frac{\epsilon n}{ku^{1/s}}=:t.$$
 Thus, there exists a set $Y\subset W$ of size $t$ that is independent in $\mathcal H.$ 
 
 Take any $X\in {Y\choose k}$ and denote by $B_{X,\mathcal K}$ the event that $X$ cannot be colored via a coloring that lies in $\mathcal K$. As a vertex of $KG_{n,k},$ $X$ cannot be colored in color $j$ if $\mathcal K_j$ is of type B, because $X$ is independent in $\mathcal H$. Next, it may be colored in $j$ with $\mathcal K_j$ of type A only if $j$ belongs to the randomly chosen subset of $u/2$ colors that was assigned to $X$. Recall that \eqref{eqlowdeg} holds. Denote $z:=(1-\epsilon)^{-1}k(s-1)u/n$ and note that $z=o(\sqrt n)$. Then the probability $p$ that $X$ cannot be colored using this coloring is at least the probability that the colors for $X$ were chosen from the complement of the set $\cup_{\ell\in X}\{j\in [u]: \mathcal K_j\text{ is of type A and }\ell\in I(\mathcal K_j)\}$. This set has size $\sum_{\ell\in X}d_i\le z$, and so we get that 
 $$p\ge \frac{{u-\sum_{i\in X}d_i\choose u/2}}{{u\choose u/2}} \ge \frac{{u-z\choose u/2}}{{u\choose u/2}}=\prod_{j=0}^{z-1}\frac{\frac u2-j}{u-j}\ge 2^{-z}\Big(1-\sum_{j=0}^{z-1}\frac{j}{u-j}\Big)=(1+o(1))2^{-z}.$$
 Expanding the expressions for $z$ and $u$, we get that $$p = (1+o(1))2^{-z} = (1+o(1))n^{-(1-\epsilon)^{-1}k\frac {s-1}{s^2}}\ge n^{-\frac k{s+1}},$$ provided $\epsilon>0$ is sufficiently small.
 
 Thus, $A_{\mathcal K} \subset \bigcap_{X\in {Y\choose k}}\bar B_{X,\mathcal K}$ and so 
 $$\Pr[A_{\mathcal K}] \le (1-p)^{{|Y|\choose k}}\le e^{-p{|Y|\choose k}}=e^{-p{t\choose k}}\le e^{-p(t/k)^k}.$$
 
 If $3\le k\le 3s^4$ then we can assume that $s=3$ in the condition in the Theorem (the statement is the strongest for $s=3$, provided the bound on $k$ is valid, which is the case here). There are sufficiently large constants $C',C'', C$ depending on $k,\epsilon$ such that  we have $$p(t/k)^k\ge C'n^{-\frac k{s+1}}t^k \ge C''n^{-\frac k{4}}\Big(\frac n {u^{1/3}}\Big)^k = Cn^{-\frac k{4}}\frac{n^{\frac {2k}{3}}}{\log^{k/3}n} %C\frac{n^{k\big(\frac 23-\frac 14\big)}}{\log^{k/s}n}
 \ge n^{0.4k}\ge n^{1.2},$$ and so $${\rm P}[A_{\mathcal K}] \le e^{-n^{1.2}}.$$
 
Slightly modifying the calculations above, we see that for sufficiently small $\epsilon$ and $s = 4$ we can get $p(t/k)^k\ge n^{1.05}$ for $k=2.$

Recall that $s, \epsilon$ are constants. If $3s^4< k\le n^{\frac 12-\frac 1s}$ then we have 
\begin{multline*}
   p(t/k)^k\ge n^{-\frac k{s+1}}\Big(\frac{\epsilon n}{k^2u^{1/s}}\Big)^k = n^{-\frac k{s+1}}n^{\frac ks} \Big(\frac{\epsilon s^2}{\log n}\Big)^k\ge\\ 2^{\frac{k}{s(s+1)}\log n -k\log \frac 1\epsilon-k\log\log n}= 2^{(1-o(1))\frac{k}{s(s+1)}\log n}\ge n^{k/s^3}, 
\end{multline*}

 provided $n$ is sufficiently large (recall that $s\ge 3$). and so $${\rm P}[A_{\mathcal K}] \le e^{-n^{k/s^3}}.$$
 
 We are now ready to conclude the proof. Denote by $U$ the event that there exists a proper coloring of $KG_{n,k}$ that respects the lists chosen as described in the beginning of the proof.  
 \begin{align*}
     {\rm P}[U]\le \sum_{\mathcal K\in \mathcal C^u }{\rm P}[A_{\mathcal K}] \le n^{sk^su}\cdot \begin{cases}e^{-n^{1.05}},\ k=2\\ e^{-n^{1.2}},\ \ 3\le k\le 3s^4\\ e^{-n^{k/s^3}},\ k> 3s^4\end{cases} <1,
 \end{align*}
 provided $n$ is sufficiently large. Let us explain why does the inequality is valid in the third case. It is equivalent to showing that $sk^s u\log n<n^{k/s^3}.$ Note that $su\log n= o(n^2)$, and thus it is enough to show that $k^s<n^{k/s^3-2},$ or, equivalently, $s\log k< \big(\frac k{s^3}-2\big)\log n.$ Note that in the assumption $k>3s^3$ we have $s< \frac{k}{s^3}-2$. Since we have $\log k<\log n,$ the aforementioned inequality (and thus the displayed inequality) is valid. Thus, there exists a choice of lists of size $u/2$ each so that no proper coloring with such list is possible. This completes the proof of the theorem.\end{proof}
 \vskip+0.2cm
 {\sc Acknowledgements:} We thank the anonymous referees for carefully reading the text and pointing out several problems with the presentation. The research of the authors is  supported by the grant RSF N 21-71-10092,
\url{https://rscf.ru/project/21-71-10092/}

 \end{document}